\tikzstyle{v} = [circle, draw, inner sep=1pt, minimum size=3pt, fill=black]
\theoremstyle{plain}
\newtheorem{thm}{Theorem}[section]
\newtheorem{lem}[thm]{Lemma}
\newtheorem{cor}[thm]{Corollary}
\theoremstyle{definition}
\newtheorem{df}[thm]{Definition}
\newtheorem{rem}[thm]{Remark}
\newtheorem{conj}[thm]{Conjecture}
\newcommand{\CC}{\mathbb{C}}
\newcommand{\NN}{\mathbb{N}}
\DeclareMathOperator{\Hom}{Hom}
\title{The Kneser chromatic function distinguishes trees}
\author{Yusaku Nishimura}
\address{School of Fundamental Science and Engineering, Waseda University, Tokyo 169-8555, Japan}
\email{n2357y@ruri.waseda.jp}
\date{}
\begin{document}

\begin{abstract}
  R.P. Stanley defined a invariant for graphs called the chromatic symmetric function and conjectured it is complete invariant for trees.
  Miezaki et.al generalised the chromatic symmetric function and defined the Kneser chromatic functions denoted by $\{X_{K_{\NN,k}}\}_{k\in\NN}$, and rephrase Stanley's conjecture that $X_{K_{\NN,1}}$ is a complete invariant for trees.
  This paper shows $X_{K_{\NN,2}}$ is a complete invariant for trees.
\end{abstract}

\maketitle

\section{Introduction}

Proper coloring in a simple graph $G$ with vertex set $V(G)$ is a way of coloring $V(G)$ such that each pair of adjacent vertices is colored with different colors.  
As an invariant derived from proper coloring, there exists a polynomial known as the chromatic polynomial.
The chromatic polynomial with indeterminate $x$, denoted by $\chi(G,x)$, is defined as the number of proper colorings of $G$ using at most $x$ colors.  
It is known that a proper coloring of $G$ can be viewed as a homomorphism from $G$ to a complete graph $K_n$, where $n$ is the number of colors.
Therefore, 
\[
  \chi(G,x)=|\Hom(G,K_x)|,
\]
where $\Hom(G,K_x)$ is the set of homomorphisms from $G$ to $K_x$.
Stanley generalized the chromatic polynomial and defined the chromatic symmetric function with indeterminates $\bold{x}$ as follows \cite{S95}:
\[
  X(G)\coloneq X(G,\bold{x})=\sum_{\phi\in \Hom(G,K_{\mathbb{N}})}\prod_{v\in V(G)}x_{\phi(v)}.
\]
Stanley conjectured that the chromatic symmetric function is a complete invariant for trees:
\begin{conj}[\cite{S95}]\label{conj:tree}
If $T_1$ and $T_2$ are non-isomorphic trees then 
\[
  X(T_1)\neq X(T_2). 
\] 
\end{conj}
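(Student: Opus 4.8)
The plan is to convert the conjecture into a reconstruction problem by means of Stanley's power-sum expansion
\[
  X(G)=\sum_{S\subseteq E(G)}(-1)^{|S|}p_{\lambda(S)},
\]
where $\lambda(S)$ is the integer partition of $|V(G)|$ recording the multiset of sizes of the connected components of the spanning subgraph $(V(G),S)$. Applied to a tree $T$ on $n$ vertices, every edge subset $S$ is a forest, so $\lambda(S)=\mu$ forces $|S|=n-\ell(\mu)$, where $\ell(\mu)$ is the number of parts; hence the sign $(-1)^{|S|}$ is constant on each fibre, and by linear independence of the $p_\mu$ the function $X(T)$ is equivalent to the entire family of unsigned statistics
\[
  f_\mu(T)=\#\{\,S\subseteq E(T):\lambda(S)=\mu\,\},\qquad \mu\vdash n.
\]
Thus it suffices to prove that the collection $\{f_\mu(T)\}_{\mu\vdash n}$ determines $T$ up to isomorphism, and the whole problem becomes one of reconstructing the adjacency structure of a tree from these component-size counts.

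My first step would be to harvest every invariant visibly encoded in the $f_\mu$. Taking $\mu=(a,1^{\,n-a})$ (a single large part) makes $f_\mu(T)$ equal to the number of connected subgraphs of $T$ on $a$ vertices, so $X(T)$ determines the entire subtree-size distribution; taking $\mu=(n-1,1)$ recovers the number of leaves; and, more generally, the two-part statistics $f_{(a,n-a)}(T)$ record, for each $a$, how many edges cut $T$ into pieces of sizes $a$ and $n-a$. These, together with further easily extracted quantities such as the number of paths of each small length, furnish a large supply of local-looking data. The natural line of attack is then an induction on $n$: use the leaf and edge-split statistics to single out a canonical vertex or edge to delete (a leaf, or a centroid), argue that the $f_\mu(T)$ determine the corresponding statistics of the smaller tree obtained by the deletion, and then reassemble $T$ from the reconstructed piece together with the information about how the deleted edge splits the vertex set.

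The hard part, and the reason the conjecture has resisted proof, is precisely this reassembly step. The numbers $f_\mu(T)$ are \emph{global} multiset counts: they record how many ways $T$ falls apart into components of prescribed sizes, but not which vertices lie in which component, so there is no evident way to localize them into the adjacency structure. Worse, the low-complexity partitions are provably insufficient, since non-isomorphic trees can share both their subtree-size distribution and all of their two-part splits; the induction therefore cannot be closed using these alone, and controlling how the high-complexity $f_\mu$ (those with many small parts) transform under leaf- or centroid-removal is exactly where a direct argument stalls. I would accordingly expect any successful attack to require either a genuinely new algebraic handle that turns the finer $f_\mu$ into localized adjacency data, or a change of invariant: enlarging the colour set from $K_{\NN}$ to the Kneser graph $K_{\NN,2}$ introduces extra coupling between the colours of adjacent vertices, designed to supply precisely the missing local information and make the reassembly step go through. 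The latter route is the one taken in this paper, but it establishes only the completeness of $X_{K_{\NN,2}}$ and not of $X=X_{K_{\NN,1}}$, so the conjecture as stated above remains beyond the reach of the present method.
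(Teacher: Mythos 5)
You have correctly recognized that this statement is Stanley's conjecture, which the paper does not prove and which remains open; accordingly there is no proof in the paper to compare against, and your decision not to fabricate one is the right call. Your reduction is sound as far as it goes: Stanley's power-sum expansion, specialized to a tree $T$ on $n$ vertices, does make every $\lambda(S)$ with $\ell(\mu)$ parts come from $|S|=n-\ell(\mu)$ edges, so the sign is constant on fibres and, by linear independence of the power sums, $X(T)$ carries exactly the data $\{f_\mu(T)\}_{\mu\vdash n}$; this matches the known literature (e.g.\ the recovery of degree and path sequences in the Martin--Morin--Wagner paper the author cites), and your diagnosis that the global multiset counts $f_\mu$ resist localization into adjacency data is an accurate account of why the conjecture is hard. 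Your final observation is also faithful to the paper: the author's actual theorem is that $X_{K_{\NN,2}}$ is complete for trees, proved by showing (via the power-sum expansion for $k=2$ and the minimum-degree-sequence machinery of Section~3) that the connected tree-shaped partitions $\overline{\lambda}\in\tilde{\Lambda}_t$ admissible by $G$ reconstruct $G$ by deleting a minimum leaf from $G_{\lambda}$ --- precisely the kind of extra local coupling between adjacent colours that you anticipated the $k=2$ Kneser graph would supply. In short: no gap to report, because you did not claim a proof; your submission is an accurate and well-informed assessment of the status of the conjecture and of what the paper actually establishes.
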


This conjecture remains an open problem, and there are many works on this conjecture.  
For example, Heil and Ji showed that Stanley's conjecture holds for all trees with at most 29 vertices \cite{HJ19}.  
Martin, Morin, and Wagner proved that $X(T)$ completely includes the information of the degree sequence and path sequence of a tree $T$, which are invariants of trees \cite{MMW08}.
Several studies have been conducted on the generalization of the chromatic symmetric function.  
Hasebe and Tsujie generalized the chromatic symmetric function for posets, calling it the strict order quasisymmetric function, and proved that it is a complete invariant for rooted trees \cite{HT17}.  
Miezaki, Munemasa, Nishimura, Sakuma, and Tsujie defined a new invariant called the Kneser-chromatic function and showed that this invariant is a complete invariant for all finite graphs \cite{mmnst2024}.

\begin{df}[Kneser chromatic function \cite{mmnst2024}]
  Let $K_{\NN,k}$ be a Kneser graph, whose vertices are $k$-subsets of natural numbers and two vertices adjacent when intersection of them are empty.
  We define $x_u(u\in K_{\NN,k})$ as indeterminates and 
  \[
    X_{K_{\NN,k}}(G) :=
    X_{K_{\NN,k}}(G, x) :=
    \sum_{\varphi \in \Hom(G,K_{\NN,k})}
    \prod_{v\in V(G)}
    x_{\varphi(v)}. 
  \]
\end{df}

\begin{thm}[\cite{mmnst2024}]
\[
\{X_{K_{\NN,k}}(\bullet)\}_{k\in \NN}
\]
is a complete invariant for finite graphs. 
\end{thm}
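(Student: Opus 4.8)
The plan is to extract, from the full family $\{X_{K_{\NN,k}}(G)\}_{k\in\NN}$, enough coefficient data to pin down the isomorphism type of $G$ completely. First I would observe that each $X_{K_{\NN,k}}(G)$ is homogeneous of total degree $n=|V(G)|$ in the variables $\{x_u\}$, since every homomorphism contributes a product of exactly $n$ factors; hence $n$ is immediately determined. The heart of the argument is to read off, for every graph $D$ on the vertex set $\{1,\dots,n\}$, the quantity
\[
  N(G,D) := \#\{\text{bijections } \beta\colon V(G)\to V(D) : \beta \text{ maps every edge of } G \text{ to an edge of } D\},
\]
i.e.\ the number of ways to realize $G$ as a spanning subgraph of $D$, and then to recover $G$ from the collection $\{N(G,D)\}_D$.

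To obtain $N(G,D)$, I would use a realizability lemma: for any finite graph $D$ on $n$ vertices there is a $k\in\NN$ and distinct $k$-subsets $u_1,\dots,u_n\subseteq\NN$ whose \emph{disjointness pattern} is exactly $D$, meaning $u_i\cap u_j=\emptyset$ if and only if $ij\in E(D)$. Such subsets are built by assigning to each non-edge $\{i,j\}$ of $D$ a private common element placed in both $u_i$ and $u_j$, and then padding every subset with fresh private elements up to a common size $k$; any $k$ exceeding the largest number of non-neighbours of a vertex works, and since the family ranges over all $k\in\NN$ this is harmless. With these colors fixed, a homomorphism $G\to K_{\NN,k}$ whose image is exactly $\{u_1,\dots,u_n\}$ must use each $u_i$ once, hence is a bijection $V(G)\to\{u_i\}$, and the homomorphism condition (adjacent vertices receive disjoint, i.e.\ $D$-adjacent, colors) is precisely the edge-preserving condition into $D$. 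Therefore the coefficient of the squarefree monomial $x_{u_1}\cdots x_{u_n}$ in $X_{K_{\NN,k}}(G)$ equals $N(G,D)$.

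Finally I would reconstruct $G$ from $\{N(G,D)\}_D$. If $N(G,D)>0$ then some bijection embeds $E(G)$ injectively into $E(D)$, so $|E(D)|\ge |E(G)|$, with equality exactly when the bijection is a graph isomorphism $G\cong D$. Consequently $|E(G)|=\min\{|E(D)| : N(G,D)>0\}$, and the minimizers are precisely the graphs isomorphic to $G$; this identifies the isomorphism class of $G$. Two graphs with the same family of Kneser chromatic functions thus yield identical data $\{N(\cdot,D)\}_D$ and so are isomorphic, proving completeness.

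I expect the main obstacle to be the realizability lemma and the bookkeeping around it: one must guarantee that the chosen $k$-subsets have \emph{exactly} the prescribed disjointness pattern with no accidental intersections, that they are pairwise distinct so that $x_{u_1}\cdots x_{u_n}$ is genuinely squarefree, and that no homomorphism other than the intended bijections contributes to that monomial. Once these points are secured, the coefficient extraction and the final minimization step are routine.
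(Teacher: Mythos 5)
Your proposal is correct, and it takes a genuinely different route from the one behind the paper's citation. Note first that the present paper does not prove this statement at all: it quotes it from \cite{mmnst2024}, and what it imports from that reference is the machinery of the power sum expansion (Theorem 2.4 here), i.e.\ the basis $p_{\overline{\lambda}}$ of $Sym^{(k)}$ and the admissibility sets $\mathcal{A}_G^{(k)}$; the completeness proof in that reference runs through this expansion, extracting the admissibility data from $X_{K_{\NN,k}}(G)$ and reconstructing $G$ from suitable admissible elements for large $k$. Your argument bypasses that basis entirely and works with raw monomial coefficients: you realize an arbitrary $n$-vertex graph $D$ as the exact disjointness pattern of $n$ distinct $k$-sets (private shared elements for non-edges, private padding up to size $k$ --- this construction is sound, and the padding guarantees both distinctness of the $u_i$ and the absence of accidental intersections), observe that the coefficient of the squarefree monomial $x_{u_1}\cdots x_{u_n}$ counts exactly the edge-preserving bijections $V(G)\to V(D)$, and then recover the isomorphism class of $G$ as the minimizers of $|E(D)|$ among those $D$ with a positive count; each step checks out, including the equality case of the minimization (an edge-preserving bijection between graphs with equally many edges is an isomorphism). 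Beyond being more elementary and self-contained, your route yields a quantitative strengthening that the citation-level statement does not advertise: since every $n$-vertex pattern $D$ is realized once $k$ exceeds the maximum number of non-neighbours, a \emph{single} function $X_{K_{\NN,k}}$ with $k\geq n$ already distinguishes all graphs on $n$ vertices, which bounds the Kneser-functional index of that class by $n$. What the power-sum route buys instead is the structural expansion indexed by spanning edge subgraphs, which is precisely the tool the present paper then exploits for its main theorem on trees (e.g.\ the observation that a connected admissible $G_{\overline{\lambda}}$ forces $S=E(G)$ when $G$ is a tree); your coefficient-extraction argument, while cleaner for completeness over all finite graphs, does not by itself provide that refined decomposition.
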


Since $K_{\NN,1}=K_{\NN}$, the Kneser-chromatic function with $k=1$ is equal to the chromatic symmetric function.
Miezaki et al. also defined a new invariant called the Kneser-functional index.
\begin{df}[\cite{mmnst2024}]
  Let $\mathcal{G}$ be the set of all simple and finite graphs.
  For $\mathcal{A}\subset\mathcal{G}$, we define the Kneser-functional index $I_{K_{\NN}}(\mathcal{A})$ as
  \[
    I_{K_{\NN}}(\mathcal{A})=\min{\{t\in\NN\mid \text{$\{X_{K_\NN,k}\}_{k=1}^t$ is a complete invariant for $\mathcal{A}$ }\}}.
  \]
\end{df}
They rephrase Stanley's conjecture as $I_{K_{\NN}}(\mathcal{T})=1$, where $\mathcal{T}$ is the set of all finite tree graphs, and consider a natural question: the upper bound of $I_{K_{\NN}}(\mathcal{T})$.
In this paper, we prove the following theorem, specifically showing that the upper bound of $I_{K_{\NN}}(\mathcal{T})$ is $2$.

\begin{thm}\label{thm:main}
  $X_{K_{\NN,2}}(\bullet)$ is a conplete invariant for trees.
\end{thm}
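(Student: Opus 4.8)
The plan is to treat $X_{K_{\NN,2}}(T)$ as a universal generating function from which one can read off weighted homomorphism counts into a large family of target graphs, recover the ordinary chromatic symmetric function as a warm-up, and then isolate a strictly finer \emph{pointed} invariant that carries rooted information.

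First I would record the basic specialization principle. For any finite family $\mathcal S$ of $2$-subsets of $\NN$, substituting $x_S = w_S$ for $S \in \mathcal S$ and $x_S = 0$ otherwise turns the sum defining $X_{K_{\NN,2}}(T)$ into
\[
  \sum_{\substack{\varphi\colon V(T)\to \mathcal S\\ \varphi \text{ disjoint on edges}}} \prod_{v\in V(T)} w_{\varphi(v)},
\]
which is the weighted number of homomorphisms from $T$ into the induced subgraph $H_{\mathcal S}:=K_{\NN,2}[\mathcal S]$. Hence $X_{K_{\NN,2}}(T)$ determines these weighted homomorphism counts for \emph{every} graph that embeds as an induced subgraph of $K_{\NN,2}$ (equivalently, every graph whose complement is a line graph). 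Taking $\mathcal S=\{P_1,\dots,P_m\}$ with $P_i=\{2i-1,2i\}$ pairwise disjoint, adjacent vertices receive disjoint pairs exactly when they receive distinct $P_i$, so this specialization reproduces the proper-coloring generating function: $X_{K_{\NN,2}}(T)$ determines $X(T)=X_{K_{\NN,1}}(T)$, and through \cite{MMW08} it recovers $n=|V(T)|$, the degree sequence, and the path sequence.

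Next I would build the new tool. Reserve a color $0$ and introduce the \emph{marked} pairs $D_i=\{0,2i-1\}$; then $D_i$ is disjoint from $P_j$ iff $i\neq j$ (the same constraint as $P_i$), while any two marked pairs share $0$ and are never disjoint. Setting $x_{P_i}=z_i$, $x_{D_i}=y_i$, and all remaining variables to $0$, the specialization becomes
\[
  \sum_{c \text{ proper}}\ \sum_{M \text{ independent}} \ \prod_{v\in M} y_{c(v)} \prod_{v\notin M} z_{c(v)},
\]
a joint generating function over proper colorings $c$ and independent marked sets $M$ (marked vertices must be pairwise non-adjacent because their pairs meet in $0$). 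Extracting the part of total $y$-degree one gives the pointed chromatic symmetric function
\[
  X^{\bullet}(T)=\sum_{r\in V(T)}\ \sum_{c \text{ proper}} y_{c(r)} \prod_{v\neq r} z_{c(v)},
\]
and the higher $y$-degrees provide its multi-marked refinements. This data is strictly finer than $X(T)$ and, being anchored at a distinguished vertex, encodes rooted structure.

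Finally I would reconstruct $T$ from the pointed data, either by induction on $n$ --- using the degree sequence together with the pointed and multi-marked invariants to detect the leaves at each vertex, to recognize when the marked data forces an isomorphism after deleting a leaf, and thereby to descend to a smaller tree --- or by reducing to rooted trees and invoking the Hasebe--Tsujie completeness theorem \cite{HT17}: if the multiset of rooted invariants $\{(T,r)\}_{r\in V(T)}$ can be recovered, their result reconstructs each rooting and hence $T$. I expect this last step to be the main obstacle. Since $X_{K_{\NN,2}}$ is invariant under relabeling $\NN$, it yields only symmetric, unordered-color data, whereas the rooted completeness of \cite{HT17} is stated through the order-sensitive strict-order quasisymmetric function; the crux is therefore to encode the directed ``strictly increasing away from the root'' condition into the inherently undirected disjointness relation of $K_{\NN,2}$ --- for instance by exploiting the bipartite level structure of a rooted tree via a two-palette specialization --- or else to prove directly, by a leaf-stripping induction, that the symmetric pointed invariant already separates non-isomorphic trees.
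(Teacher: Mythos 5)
Your reductions are correct as far as they go: the specialization principle is valid, the disjoint pairs $P_i=\{2i-1,2i\}$ do recover the ordinary chromatic symmetric function, and the marked pairs $D_i=\{0,2i-1\}$ do produce the joint generating function over proper colorings and independent marked sets (the verification that $D_i$ is disjoint from $P_j$ iff $i\neq j$, and that no two $D_i$'s are disjoint, is right). But the proof stops exactly where the theorem begins. Everything up to the definition of $X^{\bullet}(T)$ and its multi-marked refinements is a construction of candidate invariants; the statement to be proved is that some such invariant separates non-isomorphic trees, and for that you offer only two unsubstantiated directions, both of which have concrete obstructions. The reduction to Hasebe--Tsujie fails on two counts: first, $X^{\bullet}(T)$ is a \emph{sum} over all roots $r$, and recovering the multiset of individual rooted invariants $\{(T,r)\}_r$ from that sum is itself an unproved (and nontrivial) claim; second, even granting that, the strict order quasisymmetric function of \cite{HT17} counts strictly order-preserving maps and is genuinely non-symmetric --- it records which colors increase away from the root --- whereas every specialization of $X_{K_{\NN,2}}$ you construct is invariant under permuting the palette, so there is no formal derivation of the Hasebe--Tsujie data from your pointed data, as you yourself note. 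The alternative leaf-stripping induction is stated only as an intention (``detect the leaves\dots recognize when the marked data forces an isomorphism''), with no argument that the marked invariants control what happens under leaf deletion. So the proposal is a plausible research plan with its central step missing, not a proof.

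For comparison, the paper does not specialize variables at all; it works intrinsically with the power sum expansion (\Cref{thm:pse}). Writing $X_{K_{\NN,2}}(G)$ in the basis $p_{\overline{\lambda}}$, it extracts the set $\Lambda_t$ of basis indices $\overline{\lambda}$ whose associated $2$-uniform hypergraph $G_{\overline{\lambda}}$ is a tree (necessarily on $n+1$ vertices and admissible by $G$), then introduces a new total invariant of trees --- the lexicographically minimum degree sequence $r(\cdot)$ read along rooted vertex sequences --- and selects the $\overline{\lambda}\in\Lambda_t$ minimizing $r(G_{\overline{\lambda}})$. The two real lemmas are \Cref{lem:minDA}, which exhibits an explicit admissible tree $\lambda'$ (each vertex mapped to the pair $\{\text{parent index},\text{own index}\}$) showing the minimum is at most $(1,2,r(G)_2,\dots,r(G)_n)$, and \Cref{lem:distTree}, an inductive reconstruction proving that deleting a minimum leaf from the minimizing $G_{\overline{\lambda}}$ yields a tree isomorphic to $G$. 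That yields an explicit reconstruction of $G$ from $X_{K_{\NN,2}}(G)$, which is precisely the step your proposal leaves open; if you want to salvage your approach, you would need to supply an analogous reconstruction theorem for the pointed/multi-marked invariants.
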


\begin{cor}\label{cor:main}
  \[
    I_{K_\NN}(\mathcal{T})\leq 2
  \]
\end{cor}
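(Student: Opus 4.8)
The plan is to reinterpret the coefficients of $X_{K_{\NN,2}}(T)$ as injective homomorphism counts and to reduce the theorem to a reconstruction statement for trees. First I would unwind the definition combinatorially. A homomorphism $\varphi\colon T\to K_{\NN,2}$ assigns to each vertex a $2$-subset of $\NN$ (a ``colour-edge'') so that adjacent vertices receive disjoint $2$-subsets. Reading off the coefficient of a monomial $\prod_f x_f^{m_f}$ with $\sum_f m_f=n:=|V(T)|$ records the number of such $\varphi$ using each colour-edge $f$ exactly $m_f$ times. Collecting the colour-edges into a multigraph $H$ on $\NN$ (with $m_f$ parallel copies of $f$), and writing $L(H)$ for its line graph and $\overline{L(H)}$ for the graph on $E(H)$ in which two edges are adjacent exactly when they are disjoint, this coefficient is precisely the number of homomorphisms $T\to\overline{L(H)}$ whose fibres are independent sets of the prescribed sizes; in the square-free case it is the number of spanning copies of $T$ in $\overline{L(H)}$. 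Thus $X_{K_{\NN,2}}(T)$ is equivalent to the family of injective homomorphism counts of $T$ into all graphs of the form $\overline{L(H)}$, with $H$ ranging over finite multigraphs.

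Next I would recover the ordinary chromatic symmetric function for free. Specialising $x_{\{2i-1,2i\}}\mapsto y_i$ for every $i$ and $x_u\mapsto 0$ for all other $2$-subsets $u$ kills every $\varphi$ except those valued in the pairwise-disjoint family $\{\{2i-1,2i\}\}_i$; on these, disjointness of colour-edges is the same as distinctness, so the specialisation equals $X(T,\mathbf y)=X_{K_{\NN,1}}(T)$. In the reformulation this is the subfamily where $H$ is a disjoint union of stars, so that $\overline{L(H)}$ is complete multipartite; hence all classical consequences of $X$ (the number of vertices, the fact that $T$ is a tree, the degree sequence, the path and subtree data of Martin--Morin--Wagner, and so on) are available.

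The genuinely new information comes from colour-graphs $H$ that are not star forests, equivalently from target graphs $\overline{L(H)}$ that are not complete multipartite. Given two non-isomorphic trees $T_1,T_2$ on $n$ vertices, the goal becomes to exhibit a multigraph $H$ with $\operatorname{inj}(T_1,\overline{L(H)})\neq\operatorname{inj}(T_2,\overline{L(H)})$. The strategy I would follow is to apply M\"obius inversion over the edge-subsets of $T$ to rewrite these counts in terms of how forests embed into $L(H)$, and then to choose $H$ --- built from paths, cycles, and their disjoint unions with matchings and with sets of parallel edges --- whose line graphs isolate individual local configurations (incident pairs of edges, short paths, and more generally the way pendant subtrees are attached). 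Combining this with the degree information already extracted, I would reconstruct $T$ up to isomorphism by an induction that peels leaves, using the extra configuration counts to determine, at each stage, where the removed leaf was attached.

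The hard part will be precisely this reconstruction step: coloured-partition (complete multipartite) targets recover only $X(T)$, which is not known to determine trees, so the argument must genuinely exploit the non-complete-multipartite targets and show that the constraints they impose separate every pair of non-isomorphic trees. Concretely, the main obstacle is to design a family of colour-graphs $H$ whose associated counts are provably separating, and to control the inclusion--exclusion so that the separating statistic is actually extractable rather than being swamped by the lower-order terms; establishing that complement-of-line-graph embedding counts determine trees is the heart of the theorem, and \Cref{cor:main} is then immediate.
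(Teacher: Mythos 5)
Your proposal reformulates the problem but does not prove it: the entire content of the theorem is deferred to the step you yourself flag as ``the hard part.'' Everything you establish --- that the monomial coefficients of $X_{K_{\NN,2}}(T)$ count homomorphisms of $T$ into disjointness graphs on the edge sets of multigraphs $H$ with prescribed fibres, and that specialising to a perfect matching recovers the ordinary chromatic symmetric function $X(T)$ --- is a correct but essentially cost-free translation of the definitions. The recovery of $X(T)$ buys nothing toward the goal, since Stanley's conjecture (that $X$ distinguishes trees) is exactly what is open; as you note, the argument must run through targets that are not complete multipartite. But at that point you only state a goal (``exhibit a multigraph $H$ with $\operatorname{inj}(T_1,\overline{L(H)})\neq\operatorname{inj}(T_2,\overline{L(H)})$'') and a strategy (M\"obius inversion plus leaf-peeling induction), with no construction of the separating family $H$ and no argument that the resulting statistics determine $T$. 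That missing construction is the theorem; a referee could not fill it in from what you have written.

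For comparison, the paper's proof works not with monomial coefficients but with the power-sum expansion: since the $p_{\overline{\lambda}}$ form a basis, $X_{K_{\NN,2}}(G)$ determines the set $\Lambda_G$ of admissible classes $\overline{\lambda}$, hence the subset $\Lambda_t$ of those whose associated $2$-uniform graph $G_{\overline{\lambda}}$ is a tree (these are automatically admissible by $G$ itself, because for a tree $G_S$ is connected only when $S=E(G)$). The key new ingredient is a total invariant $r(\cdot)$ on trees (the lexicographically minimum rooted degree sequence), together with two facts: the minimum of $r(G_{\overline{\lambda}})$ over $\Lambda_t$ is at most $(1,2,r(G)_2,\dots,r(G)_n)$, realised by the explicit labelling $\phi(a_i)=\{i_p,i\}$ by parent pairs (\Cref{lem:minDA}); and any minimiser $\overline{\lambda}\in\tilde{\Lambda}_t$ has the property that deleting a minimum leaf from $G_{\overline{\lambda}}$ yields a tree isomorphic to $G$ (\Cref{lem:distTree}, proved by an induction that builds the isomorphism from the ``take the larger label'' bijection $\tau$). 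So the paper isolates one canonical extremal class and reconstructs $G$ from it directly, rather than searching for a separating family of targets; this is precisely the mechanism your proposal leaves unspecified.
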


\Cref{thm:main} is obtained from the following theorem:
\begin{thm}\label{lem:distTree}
  Let $\overline{\lambda} \in \tilde{\Lambda}_t$ and $\lambda \in \overline{\lambda}$.
  Define $G_{\lambda_0}$ as a tree which is obtained by removing any minimum leaf from $G_\lambda$.
  Then, $G_{\lambda_0}$ is isomorphic to $G$.
\end{thm}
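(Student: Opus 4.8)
The plan is to reduce the statement to a well-definedness claim about pendant-vertex deletion and then to settle that claim using the symmetry of $G_\lambda$ forced by membership in $\tilde{\Lambda}_t$. First I would unwind the construction of $G_\lambda$ from the representative $\lambda$: by the preceding development, $\lambda$ records the data needed to build $G_\lambda$ as the tree $G$ with one further pendant vertex attached at a prescribed place. The assertion of the theorem is that this attachment can be undone---deleting the newly created leaf recovers $G$---and, more delicately, that the result does not depend on which minimum leaf one deletes.

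The key steps, in order, are as follows. First I would exhibit the distinguished leaf $v^\ast$ of $G_\lambda$, namely the pendant vertex introduced in passing from $G$ to $G_\lambda$, and check directly from the construction that $G_\lambda\setminus v^\ast\cong G$; this shows that at least one admissible choice yields the desired isomorphism. Second, I would verify that $v^\ast$ is itself a minimum leaf for the ordering used in the statement, so that it is a legitimate choice. Third---and this is the crux---I would show that any two minimum leaves of $G_\lambda$ are related by an automorphism, whence $G_\lambda\setminus w\cong G_\lambda\setminus w'$ for all minimum leaves $w,w'$; combined with the first step this gives $G_{\lambda_0}\cong G$ for every choice.

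For the third step I would use the standard fact that two vertices of a tree lie in the same orbit of $\Aut(G_\lambda)$ if and only if the trees rooted at them are isomorphic as rooted trees; indeed any rooted isomorphism $(G_\lambda,w)\to(G_\lambda,w')$ is already an automorphism of $G_\lambda$ carrying $w$ to $w'$. It therefore suffices to show that all minimum leaves $w$ determine the same rooted-isomorphism type $(G_\lambda,w)$. This is exactly where the defining condition of $\overline{\lambda}\in\tilde{\Lambda}_t$ must be used: it guarantees that the ordering selecting the minimum leaves depends only on the rooted type at the leaf, so that two leaves are simultaneously minimum precisely when their rooted trees coincide. I expect this compatibility between the ordering and the rooted structure to be the main obstacle, since for a general tree two unrelated leaves can be tied as minimum without being interchangeable; ruling out that possibility is the whole force of working inside $\tilde{\Lambda}_t$. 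Once the compatibility is in hand, all minimum leaves share one rooted type, hence one automorphism orbit, and the theorem follows.
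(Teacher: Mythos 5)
There is a genuine gap, and it is in your very first step. You assert that ``by the preceding development, $\lambda$ records the data needed to build $G_\lambda$ as the tree $G$ with one further pendant vertex attached at a prescribed place.'' Nothing in the definition of $\tilde{\Lambda}_t$ gives this. An element $\overline{\lambda} \in \tilde{\Lambda}_t$ is only known to satisfy: (a) $G_{\overline{\lambda}}$ is a tree with $n+1$ vertices, (b) $\overline{\lambda}$ is admissible by $G$, i.e.\ there is a bijection $\phi_\lambda \colon V(G) \to \lambda$ sending adjacent vertices to intersecting pairs, and (c) $r(G_{\overline{\lambda}})$ is lexicographically minimal over $\Lambda_t$. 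The only element known in advance to be ``$G$ plus a pendant vertex'' is the particular $\lambda'$ constructed in \Cref{lem:minDA} via the parent map, and that lemma is used only to bound the minimal degree sequence from above; an arbitrary member of $\tilde{\Lambda}_t$ need not a priori arise this way. Whether it must is exactly the content of the theorem, so taking it as your step one makes the argument circular. The paper's proof, by contrast, does the real work here: it builds an explicit bijection $\tau_\lambda = \tau \circ \phi_\lambda \colon V(G) \to V(G_{\lambda_0})$ (where $\tau$ sends each pair in $\lambda$ to its maximum with respect to a minimum rooted vertex sequence of $G_\lambda$), and proves by induction along that sequence that $\tau_\lambda^{-1}$ preserves neighborhoods, squeezing $r(G_\lambda,0)$ between the upper bound of \Cref{lem:minDA} and the lower bound coming from minimality of $r(G)$ among rooted sequences of $G$.

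Your third step also rests on an unsupported (and in general false) claim: that two leaves are simultaneously minimum precisely when their rooted trees are isomorphic, i.e.\ that $r(T,w)$ is a complete invariant of the rooted tree $(T,w)$. It is not: $r(T,w)$ is just the concatenation of the sorted degree multisets of the distance levels from $w$, and there exist non-isomorphic rooted trees with identical level degree multisets (for instance, a root with one branch carrying $\{$leaf, cherry$\}$ and another carrying two paths of length two, versus the recombination $\{$leaf, path$\}$ and $\{$path, cherry$\}$). You say membership in $\tilde{\Lambda}_t$ must supply the missing compatibility, but you give no argument, and it is unclear how it could: the defining condition of $\tilde{\Lambda}_t$ compares $r(G_{\overline{\lambda}})$ across different elements of $\Lambda_t$, and says nothing directly about ties between distinct leaves inside one tree $G_\lambda$. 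Note finally that the issue your proposal centers on---independence of the choice of minimum leaf---is secondary in the paper: it is disposed of by the initial relabeling in the proof (which works for any chosen minimum leaf) and by the Remark following it, where it is shown, \emph{as a consequence of the established isomorphism}, that the minimum leaf of $G_\lambda$ is unique unless $G_\lambda$ is a path; that uniqueness is not something one can obtain upfront from symmetry considerations alone.
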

Note that detailed definitions of certain terms in \Cref{lem:distTree} are provided in \Cref{sec:3}.

In \Cref{sec:2}, we introduce some notation and state the theorems that will be used in the proof of \Cref{lem:distTree}.  
In \Cref{sec:3}, we define a new invariant for trees and prove \Cref{lem:distTree}.  
In \Cref{sec:4}, we present the proof of \Cref{thm:main}.

\section{Preliminary}\label{sec:2}

We denote the vertex set of graph $G$ as $V(G)$ and the edge set as $E(G)$, and define $N_G(v)$ as the neighborhood of vertex $v$ in $G$. 
Additionally, let $d_v = |N_G(v)|$, and $d_G(u, v)$ represent the distance between vertices $u$ and $v$.

There is a useful theorem called the power sum expansion for the Kneser chromatic function. 
To introduce this, we define some terms.

\begin{df}[\cite{mmnst2024}]
  Define an equivalence relation $\sim$ between two multisets $\{I_{1}, \cdots, I_{n}\}$ and $\{J_1,\cdots,J_n\}$ consisting of $k$-subsets of $\NN$ by $\{I_{1}, \cdots, I_{n}\}\sim\{J_1,\cdots,J_n\}$ if there exists $\sigma\in S_\NN$ such that 
  \[
  \{I_{1}, \cdots, I_{n}\} = \{\sigma(J_{1}), \cdots, \sigma(J_{n})\},
  \]
  where $S_\NN$ is the symmetric group. The equivalence class of such multisets of size $n$ defined by this equivalence relation is denoted by $\mathcal{P}_{n}^{(k)}$.
  \end{df}

Since $X_{K_{\NN,k}}(G)$ is a member of the ring of formal power series $R_{K_{\NN,k}} \coloneqq \mathbb{C}\llbracket x_{w} \mid w \in V(K_{\NN,k}) \rrbracket$ and 
the automorphism group of the Kneser graph is isomorphic to $S_\NN$, $X_{K_{\NN,k}}(G)$ belongs to the invariant ring $R_{K_{\NN,k}}^{S_\NN}$.
Especially, because all monomials of $X_{K_{\NN,k}}(G)$ are degree $|V(G)|$, $X_{K_{\NN,k}}(G)\in Sym^{(k)}$, where $Sym^{(k)}$ is the subring of $R_{K_{\NN,k}}^{S_\NN}$ formed from finite degrees.
  
Let $\overline{\lambda}$ be any element in $\mathcal{P}_{n}^{(k)}$ and let $\lambda \in \overline{\lambda}$. 
Then, we can see $\lambda$ as an edge set of some $k$-uniform hyper-multigraph. 
We define the underlying set of $\lambda$ as
  \[
    \lambda_{\text{base}} \coloneq \bigcup_{S \in \lambda} S
  \]
and also define $G_\lambda$ as a hypergraph whose vertex set is $\lambda_{\text{base}}$ and hyperedge set is $\lambda$. From the definition of $\overline{\lambda}$, $G_\lambda$ does not depend on the choice of $\lambda$. 
Therefore, we denote $G_{\overline{\lambda}}$ as a $k$-uniform hyper-multigraph constructed by $\lambda \in \overline{\lambda}$.

We can partition $G_{\overline{\lambda}}$ into connected components $G_{\overline{\lambda}} = G_{\overline{\lambda}_1} \sqcup \cdots \sqcup G_{\overline{\lambda}_\ell}$. 
Then, define $p_{\overline{\lambda}} \in Sym^{(k)}$ by
\[
  p_{\overline{\lambda}} \coloneqq \prod_{i=1}^{\ell} \sum_{\{I_{1}, \dots, I_{n}\} \in \overline{\lambda}_i} x_{I_{1}} \cdots x_{I_{n}}.
\]
It is known that $p_{\overline{\lambda}}$ forms a linear basis for $Sym^{(k)}$ over $\CC$ \cite{mmnst2024}.

\begin{df}[\cite{mmnst2024}]
Let $G$ be a connected graph with $n$ vertices. 
We say that $\overline{\lambda} \in \mathcal{P}_{n}^{(k)}$ is \textit{admissible} by $G$ if there exists a bijection $\varphi \colon V(G) \to E(G_{\overline{\lambda}})$ such that for any $\{u,v\} \in E(G)$, it holds that $\varphi(u) \cap \varphi(v) \neq \emptyset$. 
We then define $\mathcal{A}^{(k)}_{G}$ as the set of elements in $\mathcal{P}_{n}^{(k)}$ that are admissible by $G$.

If $G$ is disconnected, and $G = G_{1} \sqcup \dots \sqcup G_{\ell}$ is its decomposition into connected components, we define $\mathcal{A}_{G}^{(k)} \coloneqq \mathcal{A}_{G_{1}}^{(k)} \times \dots \times \mathcal{A}_{G_{\ell}}^{(k)}$.
\end{df}
Note that if we say that $\lambda=\{I_{1}, \cdots, I_{n}\}$ is admissible by $G$, then it means that $\overline{\lambda}$ is admissible by $G$.

\begin{thm}[Power sum expansion \cite{mmnst2024}]\label{thm:pse}
  \[
    X_{K_{\NN,k}}(G) 
    = \sum_{S \subset E(G)}(-1)^{|S|}\sum_{\overline{\lambda} \in \mathcal{A}_{G_S}^{(k)}}p_{\overline{\lambda}},
  \]
  where $G_S$ is a spanning edge subgraph of $G$ whose vertex set is $V(G)$ and edge set is $S$.
\end{thm}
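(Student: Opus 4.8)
The plan is to mirror Stanley's classical power-sum expansion for the chromatic symmetric function, replacing ``monochromatic edge'' by ``intersecting edge'' as the forbidden local pattern. First I would start from the defining sum, drop the homomorphism constraint, and reinstate it by inclusion--exclusion. A map $\varphi\colon V(G)\to V(K_{\NN,k})$ is a homomorphism precisely when $\varphi(u)\cap\varphi(v)=\emptyset$ for every edge $\{u,v\}$, so I would write the indicator of this event as $\prod_{e=\{u,v\}\in E(G)}\bigl(1-[\varphi(u)\cap\varphi(v)\neq\emptyset]\bigr)$, expand the product over $S\subseteq E(G)$, and interchange the two summations to obtain
\[
  X_{K_{\NN,k}}(G)=\sum_{S\subseteq E(G)}(-1)^{|S|}\sum_{\substack{\varphi\colon V(G)\to V(K_{\NN,k})\\ \varphi(u)\cap\varphi(v)\neq\emptyset\ \forall\{u,v\}\in S}}\ \prod_{v\in V(G)}x_{\varphi(v)}.
\]
This already exhibits the alternating sum over $S$, so the whole statement reduces to identifying, for each fixed $S$, the inner weighted sum over colorings whose images intersect along every edge of $S$ with $\sum_{\overline{\lambda}\in\mathcal{A}_{G_S}^{(k)}}p_{\overline{\lambda}}$.

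The heart of the argument is this inner identity, which I would establish by a weight-preserving correspondence. Given a map $\varphi$ that intersects along every edge of $G_S$, the family $\{\varphi(v)\}_{v\in V(G)}$ is a multiset of $k$-subsets and hence represents a class $\overline{\lambda}\in\mathcal{P}^{(k)}_{|V(G)|}$; reading $\varphi$ as an assignment of a hyperedge of $G_{\overline{\lambda}}$ to each vertex shows that $\varphi$ itself witnesses admissibility, so $\overline{\lambda}\in\mathcal{A}^{(k)}_{G_S}$, and conversely every admissible $\overline{\lambda}$ arises in this way. The delicate point is then the bookkeeping of multiplicities: summed over the full $S_\NN$-orbit defining $p_{\overline{\lambda}}$, each monomial $\prod_v x_{\varphi(v)}$ must be produced with exactly the multiplicity with which the corresponding colorings occur on the left, including the cases where $\varphi$ repeats a single $k$-subset on several vertices.

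To organize this I would first settle the connected case and then propagate. When $G_S$ is connected, admissibility forces $G_{\overline{\lambda}}$ to be connected as well, since intersecting images along a connected spanning edge set link all hyperedges into one component; the single factor of $p_{\overline{\lambda}}$ then collects precisely the monomials of the intersecting colorings lying in one orbit, and matching multiplicities amounts to counting admissibility bijections modulo the relabeling action of $S_\NN$. For a disconnected $G_S=G_1\sqcup\cdots\sqcup G_\ell$ the intersection constraints split across components, with no color forced to be shared between them, so the inner sum factorizes as a product over the $G_i$; this is exactly the multiplicative definition $\mathcal{A}^{(k)}_{G_S}=\mathcal{A}^{(k)}_{G_1}\times\cdots\times\mathcal{A}^{(k)}_{G_\ell}$ together with the product form of $p_{\overline{\lambda}}$ over the connected components of $G_{\overline{\lambda}}$, so the connected case yields the general one.

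I expect the main obstacle to be precisely this multiplicity bookkeeping rather than the inclusion--exclusion itself. One must reconcile the left-hand count, which sees each valid coloring individually and hence records both the ordering and the repetitions of $k$-subsets among the vertices, with the right-hand orbit sum defining $p_{\overline{\lambda}}$. Already the toy case $G=K_2$ with $k=2$ is instructive: the colorings $\varphi(u)=A,\ \varphi(v)=B$ with $|A\cap B|=1$ and their swap both contribute the same monomial $x_Ax_B$, so summing each underlying multiset only once would undercount by the number of admissibility bijections. The correspondence must therefore be set up so that $p_{\overline{\lambda}}$ effectively ranges over labelled representatives, equivalently over the $S_\NN$-orbit of the coloring rather than of the bare multiset; pinning down this normalization, and checking that it is consistent with the basis property of $\{p_{\overline{\lambda}}\}$ recalled above, is the crux of the proof.
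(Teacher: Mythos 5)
The paper does not actually prove this statement: \Cref{thm:pse} is quoted from \cite{mmnst2024}, and no argument for it appears in the present text. So there is no in-paper proof to compare routes with, and your proposal has to stand on its own. Your inclusion--exclusion step is correct and standard, and your reduction is the right one: everything comes down to the inner identity, for a fixed $S$ (and, by the component factorization you correctly describe, for connected $G_S$),
\[
  \sum_{\substack{\varphi\colon V(G)\to\binom{\NN}{k}\\ \varphi(u)\cap\varphi(v)\neq\emptyset\ \forall\{u,v\}\in S}}\ \prod_{v\in V(G)}x_{\varphi(v)}
  \;=\;
  \sum_{\overline{\lambda}\in\mathcal{A}^{(k)}_{G_S}}p_{\overline{\lambda}}.
\]
But you never establish this identity; your write-up ends by declaring the multiplicity normalization ``the crux of the proof,'' which means the decisive combinatorial content of the theorem is left open. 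That is a genuine gap, not a presentational one: beyond the routine alternating sum, this identity \emph{is} the theorem.

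Worse, your own toy example shows the gap cannot be closed by merely ``setting up the correspondence'' more carefully within the definitions as reproduced in this paper. Here $p_{\overline{\lambda}}$ contributes each distinct multiset of the class exactly once, so for $G_S=K_2$ and $k=2$ the right side carries the monomial $x_Ax_B$ with $|A\cap B|=1$ with coefficient $1$ (the single admissible class $\overline{\{A,B\}}$ lists each multiset once), while the left side carries it with coefficient $2$, one for each of the two colorings you exhibit. So under the ``one term per multiset'' reading, the two sides of your inner identity disagree monomial by monomial, and the number of admissibility bijections $V(G_S)\to\lambda$ enters irreducibly. A complete proof must therefore either import the exact normalization and bookkeeping from \cite{mmnst2024} (where the orbit/bijection count is built into the formulation) or formulate and prove the orbit-counting lemma you only gesture at --- a precise statement relating valid colorings modulo the $S_\NN$-relabeling action to the terms of $p_{\overline{\lambda}}$, including the degenerate cases where $\varphi$ repeats a $k$-subset and the stabilizer is nontrivial. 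Your appeal to consistency with the basis property of $\{p_{\overline{\lambda}}\}$ is a plausibility check, not a substitute for that lemma; as written, the proposal is a correct skeleton whose central step is missing.
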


  Let $G$ be a tree with $n$ vertices, where $n\geq 2$.
  Since $p_{\overline{\lambda}}$ forms a linear basis, $X_{K_{\mathbb{N},2}}(G)$ can be uniquely represented as follows:
  \[
    X_{K_{\mathbb{N},2}}(G)=\sum_{\overline{\lambda}\in\Lambda_G}a_{\overline{\lambda}} p_{\overline{\lambda}},
  \]
  where $\Lambda_G\subset\mathcal{P}_n^{(2)}$.
  From \Cref{thm:pse}, we obtain 
  \[
    \Lambda_G=\bigcup_{S\subset E(G)}\mathcal{A}_{G_S}^{(2)}.
  \]
  Therefore, for any element $\overline{\lambda} \in \Lambda_G$, there exists a subset $S \subset E(G)$ such that $\overline{\lambda} \in \mathcal{A}_{G_S}$.
  In particular, if $G_{\overline{\lambda}}$ is connected, then such an $S$ also satisfies that $G_S$ is connected.
  Since $G$ is a tree, $G_S$ is connected if and only if $S = E(G)$.
  Hence, if $G_{\overline{\lambda}}$ is connected, then $\overline{\lambda} \in \mathcal{A}_G^{(2)}$.
  Then, define $\Lambda_t$ as follows:
  \[
    \Lambda_t\coloneq\{\overline{\lambda}\in\Lambda_G\mid \text{$G_{\overline{\lambda}}$ is a tree}\}.
  \]
  Since $G_{\overline{\lambda}}$ is connected, $\Lambda_t\subset\mathcal{A}_G^{(2)}$. 
  Therefore, for all $\overline{\lambda}\in \Lambda_t$, $\overline{\lambda}$ is admissible by $G$.
  
  We confirm some simple properties of $\Lambda_t$.  
  Let $\overline{\lambda}$ be any element of $\Lambda_t$.  
  Since $\Lambda_t \subset \mathcal{A}_G^{(2)}$ and $G_{\overline{\lambda}}$ does not have multiedges, $G_{\overline{\lambda}}$ has $n$ edges.  
  This implies $|V(G_{\overline{\lambda}})| = n + 1$.  
  Hereafter, we assume that $\lambda$ is always chosen as the representative element of $\overline{\lambda}$ such that $\lambda_{\text{base}}$ is $\{0,1,\ldots,n\}$.
  Since $\lambda$ is admissible by $G$, there exists a bijection from $V(G)$ to $\lambda$.  
  Let us denote this bijection by $\phi_\lambda$.
  
\section{Definition of a minimum degree sequence of a tree and proof of \Cref{lem:distTree}}\label{sec:3}

In this section, we define some terms for \Cref{lem:distTree}, and finally, we prove \Cref{lem:distTree}.

  \begin{df}[Rooted vertex sequence]
    A vertex sequence $\{a_i\}_{i=1}^n$ of a tree $T$ is called a \textit{rooted vertex sequence} with $v$ as the root if $a_1 = v$ and for any integers $i$ and $j$ satisfying $1 \leq i < j \leq n$, $d_T(v, a_i) \leq d_T(v, a_j)$.
  \end{df}
    
  If $\{a_i\}_{i=1}^n$ is a rooted vertex sequence with $v$ as the root, then for any vertex $a_i \neq v$, there exists exactly one vertex $a_{i_p}$ such that $a_{i_p}$ is adjacent to $a_i$ and $i_p < i$. 
  We call such a vertex $a_{i_p}$ the parent of $a_i$ with $v$ as the root.

  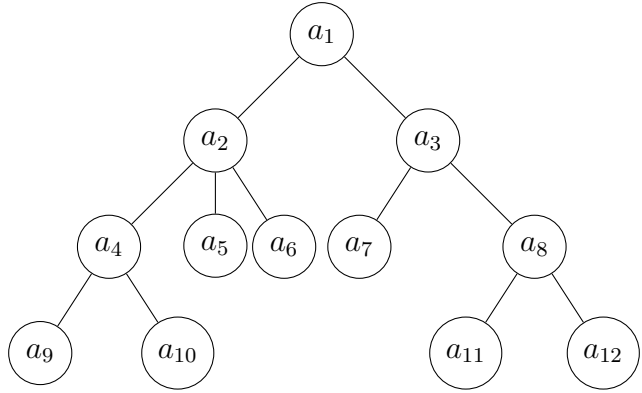
\begin{figure}
    \begin{center}
      \begin{tikzpicture}[scale=1, transform shape, node distance=2cm and 3cm, every node/.style={circle, draw}]
        \node (n1) at (0,0) {$a_1$};
    
        \node (n2) [below left of=n1] {$a_2$};
        \node (n3) [below right of=n1] {$a_3$};
    
        \node (n4) [below left of=n2] {$a_4$};
        \node (n5) [below right of=n2, xshift=-0.5cm] {$a_6$};
        \node (n6) [below left of=n3, xshift=0.5cm] {$a_7$};
        \node (n7) [below right of=n3] {$a_8$};
        \node (n8) [below of=n2, yshift=0.6cm] {$a_5$};
    
        
        \node (n9) [below right of=n4,xshift=-0.5cm] {$a_{10}$};
        \node (n10) [below left of=n4,xshift=0.5cm] {$a_9$};
        \node (n11) [below left of=n7,xshift=0.5cm] {$a_{11}$};
        \node (n12) [below right of=n7,xshift=-0.5cm] {$a_{12}$};
    
        \draw (n1) -- (n2);
        \draw (n1) -- (n3);
        \draw (n2) -- (n4);
        \draw (n2) -- (n5);
        \draw (n3) -- (n6);
        \draw (n3) -- (n7);
        \draw (n2) -- (n8);
        \draw (n4) -- (n9);
        \draw (n4) -- (n10);
        \draw (n7) -- (n11);
        \draw (n7) -- (n12);
    
      \end{tikzpicture} 
      \caption{Example of a rooted vertex sequence with $a_0$ as the root}\label{figure:case1}  
    \end{center}
    
  \end{figure}

  For example, in \Cref{figure:case1}, $\{a_i\}_{i=1}^{12}$ is a rooted vertex sequence with $a_1$ as the root.
  In this case, $a_2$ is a parent of $a_4$, $a_5$, and $a_6$ with $a_1$ as the root.
  
  Then, we define a new invariant of a tree.
  
  \begin{df}[Minimum rooted vertex sequence and minimum degree sequence]
    Let $T$ be a tree and $\{a_i\}_{i=1}^n$ be a rooted vertex sequence with $v$ as the root. 
    $\{a_i\}_{i=1}^n$ is called a \textit{minimum rooted vertex sequence} if any rooted vertex sequence $\{a'_i\}_{i=1}^n$ with $v$ as the root satisfies
    \[
      (d_v, d_{a_2}, \cdots, d_{a_n}) \leq_{lex} (d_v, d_{a'_2}, \cdots, d_{a'_{n}}),
    \]
    where $\leq_{lex}$ denotes the lexicographical order. 
    We denote $r(T,v)$ as the degree sequence of the minimum rooted vertex sequence with $v$ as the root. Specifically, a vertex $v$ is said to be a \textit{minimum leaf} if for all vertices $u \in V(T)$, $r(T,v) \leq_{lex} r(T,u)$ holds. 
    Additionally, $r(T,v)$ is called a \textit{minimum degree sequence} of $T$ if $v$ is a minimum leaf.
  \end{df}
  
  Note that, generally, a minimum leaf is not unique, but the minimum degree sequence of a tree is always unique. 
  We represent $r(T,v)_k$ as the degree of the $k$th vertex in $r(T,v)$. 
  We denote $r(T)$ as a minimum degree sequence of $T$. 
  The definition of $r(T)_k$ is the same.
  
  For example, in \Cref{figure:case1}, 
  \[
    (a_{11},a_{8},a_{12},a_3,a_7,a_1,a_2,a_5,a_6,a_4,a_9,a_{10})
  \]
  is one of the minimum rooted vertex sequences with $a_{11}$ as the root and 
  \[
    r(T,a_{11})=(1,3,1,3,1,2,4,1,1,3,1,1).
  \]
  Since $r(T,a_{11})$ is the minimum for all vertices in the tree, $a_{11}$ is a minimum leaf and $r(T)=r(T,a_{11})$.

  Then, define $\tilde{\Lambda}_t$ as follows:
  \[
  \tilde{\Lambda}_t \coloneq \{\overline{\lambda} \in \Lambda_t \mid \text{$r(G_{\overline{\lambda}})$ is the minimum in $\Lambda_t$ with respect to lexicographical order}\}.
  \]
  Then, we will prove \Cref{lem:distTree}.
  Before the proof, we prepare the following lemma.  

\begin{lem}\label{lem:minDA}
  For any $\overline{\lambda}\in\tilde{\Lambda}_t$,
  \[
    r(G_{\overline{\lambda}})\leq_{lex} (1,2,r(G)_2,r(G)_3,\cdots,r(G)_n).
  \]
\end{lem}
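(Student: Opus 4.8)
The plan is to establish the bound by exhibiting one explicit element of $\Lambda_t$ whose associated tree realizes the right-hand side, and then to invoke the minimality that defines $\tilde{\Lambda}_t$. Concretely, let $v$ be a minimum leaf of $G$ and let $w$ be its unique neighbor. Form a tree $H$ on $n+1$ vertices by subdividing the edge $\{v,w\}$: delete $\{v,w\}$, introduce a new vertex $m$, and add the edges $\{v,m\}$ and $\{m,w\}$. I will show that $H$ arises as $G_{\overline{\mu}}$ for some $\overline{\mu}\in\Lambda_t$ and that $r(H)\leq_{lex}(1,2,r(G)_2,\dots,r(G)_n)$. Since $\overline{\lambda}\in\tilde{\Lambda}_t$ minimizes $r(G_{\overline{\lambda}})$ over $\Lambda_t$, we then have $r(G_{\overline{\lambda}})\leq_{lex}r(H)$, and the claim follows at once.

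For the first point I will produce an explicit bijection $\varphi\colon V(G)\to E(H)$ witnessing admissibility. Rooting $G$ at $v$, set $\varphi(v)=\{v,m\}$, $\varphi(w)=\{m,w\}$, and $\varphi(u)=\{u,\mathrm{parent}(u)\}$ for every other vertex $u$, where $\mathrm{parent}(u)$ is the parent of $u$ with $v$ as the root. As $u$ ranges over $V(G)\setminus\{v,w\}$ the edges $\{u,\mathrm{parent}(u)\}$ range exactly over the edges of $G$ other than $\{v,w\}$, so together with $\{v,m\}$ and $\{m,w\}$ they exhaust $E(H)$, and $\varphi$ is a bijection. A short case check on the edges of $G$ (the edge $\{v,w\}$, the edges incident to $w$, and the remaining edges) shows that adjacent vertices of $G$ are always sent to edges of $H$ that share a vertex; hence $\overline{\mu}$, the class of the multiset $\{\varphi(u):u\in V(G)\}$, is admissible by $G$. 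Because $H$ is connected, the discussion preceding this lemma gives $\overline{\mu}\in\mathcal{A}_G^{(2)}\subset\Lambda_G$, and since $G_{\overline{\mu}}=H$ is a tree we conclude $\overline{\mu}\in\Lambda_t$.

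For the second point I will use that a minimum rooted vertex sequence is obtained by listing the vertices in nondecreasing order of distance from the root and sorting the degrees within each distance level in ascending order; equivalently, $r(T,v)$ is the concatenation of the sorted degree lists of the successive distance levels of $T$ from $v$. Applied to $H$ rooted at $v$: the subdivision preserves every original degree and inserts $m$ (of degree $2$) between $v$ and $w$, so $d_H(v,x)=d_G(v,x)+1$ for all $x\neq v,m$, while $m$ lies at distance $1$. Thus the distance levels of $H$ from $v$ are $\{v\}$, then $\{m\}$, and, for $d\geq 2$, precisely the level $d-1$ of $G$ from $v$ with the same degrees. As $v$ is a minimum leaf of $G$ we have $r(G)=r(G,v)$, and the level computation yields $r(H,v)=(1,2,r(G)_2,\dots,r(G)_n)$. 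Finally $r(H)=\min_{u}r(H,u)\leq_{lex}r(H,v)$, so $r(G_{\overline{\lambda}})\leq_{lex}r(H)\leq_{lex}(1,2,r(G)_2,\dots,r(G)_n)$.

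I expect the main obstacle to be the bookkeeping in the second point rather than any conceptual difficulty: one must justify carefully that a minimum rooted vertex sequence sorts degrees within each distance level, and that the subdivision shifts these levels by one without altering degrees, so that the inserted vertex $m$ contributes exactly the entry $2$ in the second coordinate while the tail reproduces $r(G)$ from its second coordinate onward (in fact this gives the equality $r(H,v)=(1,2,r(G)_2,\dots,r(G)_n)$, which is slightly stronger than needed). The admissibility verification in the first point is routine but must be carried out for all three edge types to confirm $\overline{\mu}\in\Lambda_t$.
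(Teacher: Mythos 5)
Your proof is correct and is essentially the paper's own argument: the paper likewise exhibits one explicit element of $\Lambda_t$ realizing $(1,2,r(G)_2,\dots,r(G)_n)$ and then invokes the minimality defining $\tilde{\Lambda}_t$, constructing that element by attaching a new leaf $0$ to the minimum leaf $a_1$ of $G$ via $\phi(a_i)=\{i_p,i\}$. Your tree $H$, obtained by subdividing the pendant edge at the minimum leaf $v$, is isomorphic to the paper's tree (send the new leaf $0$ to $v$ and $v$ to $m$), and under this isomorphism your admissibility bijection coincides with the paper's, so the two proofs differ only in presentation.
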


\begin{proof}
  Let $\{a_i\}_{i=1}^n$ be a minimum rooted vertex sequence of $G$ with $a_1$ as the root and $a_1$ as a minimum leaf.  
  From the minimality of $\tilde{\Lambda}_t$, it is sufficient to prove the existence of $\overline{\lambda'}$ such that $\overline{\lambda'}$ is in $\Lambda_t$ and $r(G_{\overline{\lambda'}})\leq_{lex}(1,2,r(G)_2,r(G)_3,\cdots,r(G)_n)$.  
  We consider constructing such a $\lambda'$.
  
  Let $a_{i_p}$ be a parent of $a_i$ with $a_1$ as the root, and we define $i_p=0$.
  Then, we define $\phi: V(G) \longrightarrow \begin{pmatrix} \NN \cup \{0\} \\ 2 \end{pmatrix}$ as follows:
  \[
  \phi(a_i) = \{i_p, i\}.
  \]
  An example of $\phi$ is shown in \Cref{figure:case4}.
  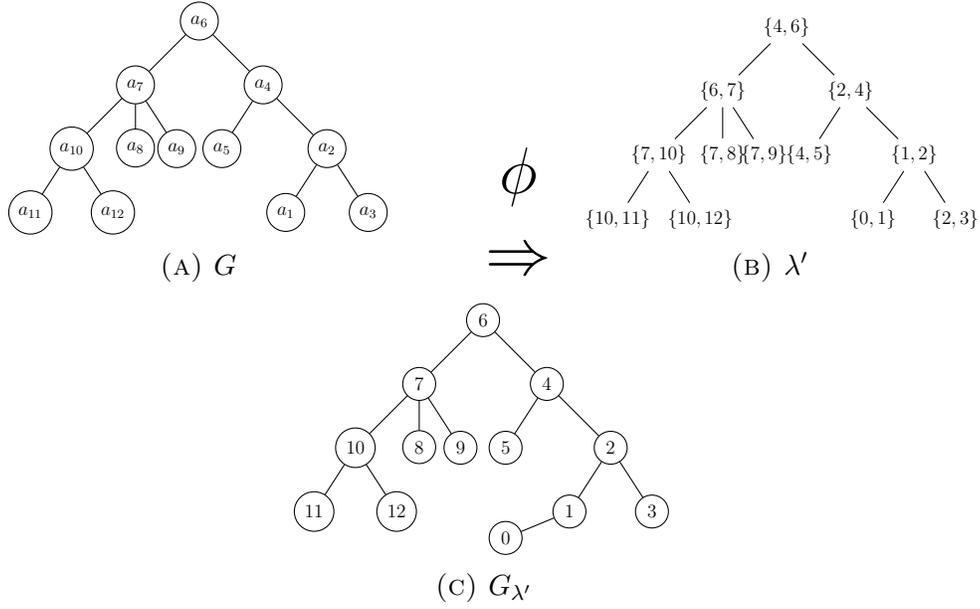
\begin{figure}
    \begin{minipage}[b]{0.40\columnwidth}
      \begin{tikzpicture}[scale=0.6, transform shape, node distance=2cm and 3cm, every node/.style={circle, draw}]
        \node (n1) at (0,0) {$a_6$};
    
        \node (n2) [below left of=n1] {$a_7$};
        \node (n3) [below right of=n1] {$a_4$};
    
        \node (n4) [below left of=n2] {$a_{10}$};
        \node (n5) [below right of=n2, xshift=-0.5cm] {$a_9$};
        \node (n6) [below left of=n3, xshift=0.5cm] {$a_5$};
        \node (n7) [below right of=n3] {$a_2$};
        \node (n8) [below of=n2, yshift=0.6cm] {$a_8$};
    
        
        \node (n9) [below right of=n4,xshift=-0.5cm] {$a_{12}$};
        \node (n10) [below left of=n4,xshift=0.5cm] {$a_{11}$};
        \node (n11) [below left of=n7,xshift=0.5cm] {$a_{1}$};
        \node (n12) [below right of=n7,xshift=-0.5cm] {$a_{3}$};
    
        \draw (n1) -- (n2);
        \draw (n1) -- (n3);
        \draw (n2) -- (n4);
        \draw (n2) -- (n5);
        \draw (n3) -- (n6);
        \draw (n3) -- (n7);
        \draw (n2) -- (n8);
        \draw (n4) -- (n9);
        \draw (n4) -- (n10);
        \draw (n7) -- (n11);
        \draw (n7) -- (n12);
    
      \end{tikzpicture}  
      \subcaption{$G$}
    \end{minipage}
    \hfill
    \hspace{0.05\columnwidth}
    \begin{tikzpicture}
        \node(arrow) at (0,0){\huge $\Rightarrow$}; 
        \node at (arrow.north) [above=3mm] {\huge $\phi$};
    \end{tikzpicture}%
    \hfill
    \begin{minipage}[b]{0.40\columnwidth}
      \begin{tikzpicture}[scale=0.6, transform shape, node distance=2cm and 3cm]
        \node (n1) at (0,0) {$\{4,6\}$};
    
        \node (n2) [below left of=n1] {$\{6,7\}$};
        \node (n3) [below right of=n1] {$\{2,4\}$};
    
        \node (n4) [below left of=n2] {$\{7,10\}$};
        \node (n5) [below right of=n2, xshift=-0.5cm] {$\{7,9\}$};
        \node (n6) [below left of=n3, xshift=0.5cm] {$\{4,5\}$};
        \node (n7) [below right of=n3] {$\{1,2\}$};
        \node (n8) [below of=n2, yshift=0.6cm] {$\{7,8\}$};
    
        
        \node (n9) [below right of=n4,xshift=-0.5cm] {$\{10,12\}$};
        \node (n10) [below left of=n4,xshift=0.5cm] {$\{10,11\}$};
        \node (n11) [below left of=n7,xshift=0.5cm] {$\{0,1\}$};
        \node (n12) [below right of=n7,xshift=-0.5cm] {$\{2,3\}$};
    
        \draw (n1) -- (n2);
        \draw (n1) -- (n3);
        \draw (n2) -- (n4);
        \draw (n2) -- (n5);
        \draw (n3) -- (n6);
        \draw (n3) -- (n7);
        \draw (n2) -- (n8);
        \draw (n4) -- (n9);
        \draw (n4) -- (n10);
        \draw (n7) -- (n11);
        \draw (n7) -- (n12);
    
      \end{tikzpicture}  
      \subcaption{$\lambda'$}
 
    \end{minipage}
    \begin{center}
      \begin{minipage}[b]{0.40\columnwidth}
        \begin{tikzpicture}[scale=0.6, transform shape, node distance=2cm and 3cm, every node/.style={circle, draw}]
          \node (n1) at (0,0) {$6$};
      
          \node (n2) [below left of=n1] {$7$};
          \node (n3) [below right of=n1] {$4$};
      
          \node (n4) [below left of=n2] {$10$};
          \node (n5) [below right of=n2, xshift=-0.5cm] {$9$};
          \node (n6) [below left of=n3, xshift=0.5cm] {$5$};
          \node (n7) [below right of=n3] {$2$};
          \node (n8) [below of=n2, yshift=0.6cm] {$8$};
      
          
          \node (n9) [below right of=n4,xshift=-0.5cm] {$12$};
          \node (n10) [below left of=n4,xshift=0.5cm] {$11$};
          \node (n11) [below left of=n7,xshift=0.5cm] {$1$};
          \node (n12) [below right of=n7,xshift=-0.5cm] {$3$};
          \node (n0) [below of=n6] {$0$};
      
          \draw (n1) -- (n2);
          \draw (n1) -- (n3);
          \draw (n2) -- (n4);
          \draw (n2) -- (n5);
          \draw (n3) -- (n6);
          \draw (n3) -- (n7);
          \draw (n2) -- (n8);
          \draw (n4) -- (n9);
          \draw (n4) -- (n10);
          \draw (n7) -- (n11);
          \draw (n7) -- (n12);
          \draw (n11) -- (n0);
      
        \end{tikzpicture}  
        \subcaption{$G_{\lambda'}$}
      \end{minipage}  
    \end{center} 
    \caption{An example of $\phi$}\label{figure:case4}
  \end{figure}
  We check that $Im(\phi)=\lambda'$ has the desired properties.  
  First, we show the admissibility of $\lambda'$.  
  For any pair of vertices $a_i$ and $a_j$ in $G$, if they are adjacent, then $i_p=j$ or $j_p=i$.  
  This implies $\{i_p,i\} \cap \{j_p,j\} \neq \emptyset$.  
  Therefore, $\lambda'$ is admissible for $G$.  
  Next, we show that $G_{\lambda'}$ is a tree.  
  Let $G_{\lambda'_0}$ be an induced subgraph of $G_{\lambda'}$ with vertex $0$ removed.  
  In fact, $G_{\lambda'_0}$ is isomorphic to $G$ because $E(G)=\{a_{i_p},a_{i}\}_{i=2}^n$ and $E(G_{\lambda'_0})=\phi(V(G) \setminus \{a_1\})$.
  Therefore, $G_{\lambda'_0}$ is a tree.  
  Since $G_{\lambda'}$ is obtained by adding a leaf $0$ to $G_{\lambda'_0}$, $G_{\lambda'}$ is also a tree.  
  Finally, we check $r(G_{\lambda'}) \leq_{lex} (1,2,r(G)_2,r(G)_3,\cdots,r(G)_n)$.  
  Because $G_{\lambda'_0}$ is isomorphic to $G$ and $a_1$ is a minimum leaf of $G$, vertex $1$ in $G_{\lambda'_0}$ is also a minimum leaf and  
  \[
    r(G_{\lambda'_0},1) = r(G).
  \]  
  Additionally, since $0$ is a leaf adjacent to $1$, $r(G_{\lambda'},0) = (1,2,r(G)_2,r(G)_3,\cdots,r(G)_n)$.  
  Then,  
  \[
    r(G_{\lambda'}) \leq_{lex} r(G_{\lambda'},0) = (1,2,r(G)_2,r(G)_3,\cdots,r(G)_n).
  \]  
  From the above, there exists $\lambda'$ such that $\overline{\lambda'} \in \Lambda_t$ and $r(G_{\lambda'})$ satisfies the desired inequality.  
  Hence, we obtain $r(G_{\overline{\lambda}}) \leq_{lex} (1,2,r(G)_2,r(G)_3,\cdots,r(G)_n)$.
\end{proof}

Then, we prove \Cref{lem:distTree}.

\begin{proof}[Proof of \Cref{lem:distTree}]
  
  First, we construct a bijection from $V(G)$ to $V(G_{\lambda_0})$ by using a rooted vertex sequence in $G_\lambda$.
  Let $\overline{\lambda}\in\Lambda_t$, $\lambda\in\overline{\lambda}$, and $l$ be a minimum leaf of $G_\lambda$.
  Define $\{a_i\}_{i=0}^n$ as any minimum rooted vertex sequence in $G_\lambda$ with $l$ as the root and define $\sigma\in S_{n+1}$ such that $\sigma(\{a_i\}_{i=0}^n)=\{i\}_{i=0}^n$.
  Then, $\sigma(\lambda)$ is also in $\overline{\lambda}$ and $\{i\}_{i=0}^n$ is a minimum rooted vertex sequence in $G_{\sigma(\lambda)}$.
  Therefore, without loss of generality, we assume that $\{i\}_{i=0}^n$ is a rooted vertex sequence with $0$ as the root, and $0$ is the minimum leaf in $G_\lambda$.
  In this case, for any vertex $v$ in $G$, if $\phi_\lambda(v)=\{x_1,y_1\}$ and $x_1<y_1$, then a parent of $y_1$ is $x_1$ with $0$ as the root in $G_\lambda$.
  Especially, for any vertices $u\neq v$ in $G$, if $\phi_\lambda(u)=\{x_2,y_2\}$ and $x_2<y_2$, then $y_1\neq y_2$.
  This is because the parent of any vertex is unique.
  Therefore, 
  \[
    \max{\phi_\lambda(u)} \neq \max{\phi_\lambda(v)}.
  \]
  From this property, we obtain the bijection $\tau$ which is defined as follows:
  \begin{align*}
    \tau: \lambda & \longrightarrow \{1, \cdots, n\} \\
            \{x, y\} & \mapsto \max{(x, y)}.
  \end{align*}
  Hence, $\tau \circ \phi_\lambda$ is also a bijection from $V(G)$ to $V(G_{\lambda_0})$, where $G_{\lambda_0}$ is the graph $G_\lambda$ minus vertex $0$.
  We denote $\tau_\lambda = \tau \circ \phi_\lambda$.  
  For example, in \Cref{figure:case4}, $a_6 \in V(G)$ corresponds to $\{4,6\} \in \lambda'$ via $\phi_{\lambda'}$  
  and $\{4,6\}$ corresponds to $6 \in V(G_{\lambda'_0})$ via $\tau$.  
  Therefore, $a_6$ corresponds to $6$ via $\tau_{\lambda'}$.  
  Hereinafter, $\tau$ and $\tau_\lambda$, corresponding to a rooted vertex sequence $\{i\}_{i=0}^n$ of $G_\lambda$, are defined above.  
  Using this bijection $\tau_\lambda$, we will prove that $\tau_\lambda^{-1}$ is a isomorphism.

  Let $\tau_\lambda^{-1}(i) = v_i$ for all vertices $i$ in $V(G_{\lambda_0})$. 
  We show $\tau_\lambda^{-1}(N_{G_{\lambda_0}}(i)) = N_G(v_i)$ by induction on $i$. 
  First, we prove the base case $i = 1$. 
  From \Cref{lem:minDA}, we obtain $r(G_\lambda,0)_0 = 1$ and $r(G_\lambda,0)_1 \leq 2$. 
  If $r(G_\lambda, 0)_1 = 1$, then the degree of vertex $1$ in $G_\lambda$ is $1$, and $G_\lambda$ must be a path with $2$ vertices. 
  This contradicts the assumption that $G_\lambda$ has at least $3$ vertices.
  Therefore, $r(G_\lambda,0)_1 = 2$, and this implies 
  \begin{align}
      \{\{x,y\} \in \lambda \mid 0 \in \{x,y\}\} &= \{\{0,1\}\} \notag\\
      \{\{x,y\} \in \lambda \mid 1 \in \{x,y\}\} &= \{\{0,1\}, \{1,2\}\} \notag.
  \end{align}
  Hence, we obtain $\phi_{\lambda}(v_1)=\{0,1\}$ and the edge set of $G_{\lambda_0}$ is $\lambda \setminus \{\{0,1\}\}$; it implies $N_{G_{\lambda_0}}(1)=\{2\}$. 
  Since $\lambda$ is admissible,
  \begin{align*}
    \phi_\lambda(N_G(v_1))&\subset \{\{x,y\}\in\lambda\mid \{0,1\}\cap\{x,y\}\neq\emptyset\}\\
    &=\{\{x,y\}\in\lambda\mid 0\in\{x,y\}\}\cup\{\{x,y\}\in\lambda\mid 1\in\{x,y\}\}\\
    &=\{\{0,1\},\{1,2\}\}.
  \end{align*}
  Since $v_1 \notin N_G(v_1)$,
  \begin{align*}
    \phi_\lambda(N_G(v_1)) & \subset \{\{1,2\}\}.
  \end{align*}
  Apply $\tau$ to both sides of this inclusion,
  \[
    \tau_\lambda(N_G(v_1)) \subset \{2\}.
  \]
  Additionally, since $G$ is a tree, $|N_G(v_1)| \geq 1$.
  Therefore,
  \[
    \tau_\lambda(N_G(v_1)) = \{2\} = N_{G_{\lambda_0}}(1)
  \]
  and we get $\tau_\lambda^{-1}(N_{G_{\lambda_0}}(1)) = N_G(v_1)$.

  Next, we assume that $\tau_\lambda^{-1}(N_{G_{\lambda_0}}(i))=N_G(v_i)$ holds for all $i$ satisfying $1 \leq i \leq m<n$.
  Then, there exists $p_{m+1} < m+1$ such that $\phi_\lambda(v_{m+1}) = \{p_{m+1}, m+1\}$.
  Obviously, $m+1 \in N_{G_\lambda}(p_{m+1})$. 
  Since $m+1 \geq 2$, we get $p_{m+1} \neq 0$ and 
  \begin{equation}\label{eq:1}\tag{I}
      m+1 \in N_{G_{\lambda_0}}(p_{m+1}).
  \end{equation}
  Similarly to before, since $\lambda$ is admissible,
  \begin{align*}
      \phi_\lambda(N_G(v_{m+1})) & \subset \{\{x,y\} \in \lambda \mid p_{m+1} \in \{x,y\}\} \cup \{\{x,y\} \in \lambda \mid m+1 \in \{x,y\}\}.
  \end{align*}
  Additionally, 
  \begin{align*}
    \tau(\{\{x,y\}\in\lambda \mid p_{m+1} \in \{x,y\}\}) &\subset N_{G_{\lambda_0}}(p_{m+1}) \cup \{p_{m+1}\}, \\
    \tau(\{\{x,y\}\in\lambda \mid m+1 \in \{x,y\}\}) &\subset N_{G_{\lambda_0}}(m+1) \cup \{m+1\}.
  \end{align*}
  Therefore, 
  \[
    \tau_\lambda(N_G(v_{m+1})) \subset N_{G_{\lambda_0}}(p_{m+1}) \cup N_{G_{\lambda_0}}(m+1).
  \]
  Let $v_{p_{m+1}} = \tau_\lambda^{-1}(p_{m+1})$. 
  From the assumption, $\tau_\lambda^{-1}(N_{G_{\lambda_0}}(p_{m+1})) = N_G(v_{p_{m+1}})$. 
  Here, from \Cref{eq:1}, 
  \[
    v_{m+1} = \tau_\lambda^{-1}(m+1) \in N_G(v_{p_{m+1}}).
  \]
  Then, since adjacent vertices in a tree do not have a common neighbor,
  \begin{align*}
    \tau_\lambda(N_G(v_{m+1})) \cap N_{G_{\lambda_0}}(p_{m+1}) &= \tau_\lambda(N_G(v_{m+1})) \cap \tau_\lambda(N_G(v_{p_{m+1}})) \\
    &= \tau_\lambda(N_G(v_{m+1}) \cap N_G(v_{p_{m+1}})) \\
    &= \emptyset.
  \end{align*}
  Hence,
  \begin{equation}\label{eq:2}\tag{II}
    \tau_\lambda(N_G(v_{m+1}))\subset N_{G_{\lambda_0}}(m+1).
  \end{equation}
  Because $N_{G_{\lambda_0}}(i) = N_{G_\lambda}(i)$ for all $i \geq 2$, 
  \[
    |N_{G_{\lambda_0}}(m+1)| = r(G_\lambda, 0)_{m+1}.
  \]
  From now on, we will show that $|N_{G}(v_{m+1})| = r(G_{\lambda}, 0)_{m+1}$.
  From the assumption that $|N_{G_{\lambda_0}}(i)| = |N_{G}(v_i)|$ holds for all $1 \leq i \leq m$, it follows that
  \begin{align*}
    r(G_\lambda,0) &= (1,2,|N_{G_{\lambda_0}}(2)|,|N_{G_{\lambda_0}}(3)|,\cdots,|N_{G_{\lambda_0}}(n)|) \\
    &= (1,2,|N_{G}(v_2)|,|N_{G}(v_3)|,\cdots,|N_{G}(v_n)|).
  \end{align*}
  Since $0$ is a minimum leaf of $G_\lambda$, $r(G_\lambda,0) = r(G_{\overline{\lambda}})$ and from \Cref{lem:minDA},
  \begin{equation}\label{eq:3}\tag{III}
      r(G_\lambda,0) \leq_{lex} (1,2,r(G)_2,r(G)_3,\cdots, r(G)_n).
  \end{equation}
  Suppose that there exist $v_{x_{m+2}}, v_{x_{m+3}}, \cdots, v_{x_{n}}$ such that $(v_1, v_2, \cdots, v_m, v_{m+1}, v_{x_{m+2}}, \cdots, v_{x_n})$ is a rooted vertex sequence.
  Then, from the minimality of $r(G)$, 
  \begin{equation}\label{eq:4}\tag{IV}
    \{|N_G(v_i)|\}_{i=1}^{m+1} \geq_{lex} \{r(G)_i\}_{i=1}^{m+1}.
  \end{equation}
  From \Cref{eq:3} and \Cref{eq:4}, we obtain 
  \[
    \{|N_G(v_i)|\}_{i=1}^{m+1}=\{r(G)_i\}_{i=1}^{m+1}
  \] 
  and especially $|N_{G}(v_{m+1})|=r(G_{\lambda},0)_{m+1}$.
  Therefore, to prove $|N_{G}(v_{m+1})| = r(G_{\lambda},0)_{m+1}$, we consider the existence of such a rooted vertex sequence. Let $k$ be the distance between vertex $1$ and vertex $m+1$ in $G_{\lambda_0}$, and define 
  \[
    D_{x}(1) \coloneq \{i \in V(G_{\lambda_0}) \mid d_{G_{\lambda_0}}(1,i) \leq x\},
  \]
  where $x$ is any integer. Since $\{i\}_{i=1}^n$ is a rooted vertex sequence in $G_{\lambda_0}$,
  \begin{align*}
    D_{k-1}(1) &\subset \{1,2,\cdots,m+1\} \subset D_k(1).
  \end{align*}
  Here, define $D_{x}(v_1)$ as follows:
  \[
    D_{x}(v_1) \coloneq \{v_i \in V(G) \mid d_{G}(v_1, v_i) \leq x\}.
  \]
  By the inductive hypothesis, 
  \begin{align*}
    D_{k-1}(v_1) &= \tau_\lambda(D_{k-1}(1)) \subset \{v_1, \cdots, v_m, v_{m+1}\} \subset D_{k}(v_1) = \tau_\lambda(D_{k}(1)).
  \end{align*}
  It follows that there exists a desired vertex sequence.
  From the above, $|N_{G}(v_{m+1})| = r(G_{\lambda},0)_{m+1}$. 
  Using $|N_{G_{\lambda_0}}(m+1)| = r(G_\lambda,0)_{m+1}$ and \Cref{eq:2},
  \[
      N_{G_{\lambda_0}}(m+1) = \tau_{\lambda}(N_G(v_{m+1})).
  \]
  
  Hence, by induction, for any vertex $i\in V(G_{\lambda_0})$,
  \[
    \tau_\lambda^{-1}(N_{G_{\lambda_0}}(i))=N_G(v_i).
  \]
  This means $\tau_\lambda^{-1}$ is a graph homomorphism, and it follows that $\tau_\lambda$ is an isomorphism.
\end{proof}

\begin{rem}
  Generally, a minimum leaf is not unique in a tree. Therefore, $G_{\lambda_0}$ seems to depend on the choice of a minimum leaf. Actually, we can say that either a minimum leaf of $G_{\lambda}$ is unique or $G_{\lambda}$ is a path. In the proof of \Cref{lem:distTree}, we show \Cref{eq:3} and \Cref{eq:4}, and from induction, we also get
  \begin{equation*}
    \{|N_G(v_i)|\}_{i=1}^{n} = \{r(G)_i\}_{i=1}^{n}.
  \end{equation*}
  It follows that vertex $1$ is also a minimum leaf in $G_{\lambda_0}$. Consequently, $G_{\lambda}$ is a tree obtained by adding leaf $0$ to a minimum leaf $1$ in $G$.
  Next, we show that $0$ is a minimum leaf in $G_{\lambda}$ and establish its uniqueness. For any other leaf $l$ in $G_{\lambda}$, since the degree of vertex $1$ is $1$ in $G_{\lambda_0}$ and $2$ in $G_{\lambda}$, we have
  \[
    r(G_{\lambda},l) >_{lex} r(G_{\lambda_0}).
  \]
  If $G_{\lambda}$ is not a path, then there exists $k \leq n$ such that $r(G_\lambda)_k \geq 3$ and for all $m$ smaller than $k$, $r(G_\lambda)_m \leq 2$.
  Therefore, 
  \begin{align*}
    r(G_{\lambda_0})&=(1,\overbrace{2,\cdots,2}^{k-1},r(G)_k,r(G)_{k+1},\cdots,r(G)_n),\\
    r(G_{\lambda},0)&=(1,\overbrace{2,\cdots,2}^{k},r(G)_k,r(G)_{k+1},\cdots,r(G)_n).
  \end{align*}
  This implies $r(G_{\lambda},0) <_{lex} r(G_{\lambda_0})$. 
  Hence, for all other leaves $l$, 
  \[
    r(G_{\lambda},0) <_{lex} r(G_{\lambda},l),
  \]
  which shows $0$ is a minimum leaf and it is unique. 
  When $G_{\lambda}$ is a path, then there are two minimum leaves, and no matter which vertex is removed, the resulting graph is isomorphic. 
  Furthermore, for each case, 
  \[
    r(G_{\overline{\lambda}})=r(G_{\lambda},0) = (1,2,r(G)_2,\cdots,r(G)_n).
  \] 
  Therefore, the equality in \Cref{lem:minDA} always holds.
\end{rem}

\section{Proof of \Cref{thm:main}}\label{sec:4}

In this section, we prove \Cref{thm:main}.

\begin{proof}[Proof of \Cref{thm:main}]
  Let $G_1$ and $G_2$ be trees, and assume that 
  \[
    X_{K_{\NN,2}}(G_1) = X_{K_{\NN,2}}(G_2) = \sum_{\overline{\lambda} \in \Lambda_G} a_{\overline{\lambda}} p_{\overline{\lambda}}.
  \]
  Then, we can uniquely determine $\tilde{\Lambda}_t$ from $\Lambda_G$, and by \Cref{lem:distTree}, we can construct $G_{\lambda_0}$ which is isomorphic to both $G_1$ and $G_2$. 
  Therefore, $G_1 \simeq G_2$, which implies that $X_{K_{\NN,2}}(\bullet)$ is a complete invariant.
\end{proof}


\begin{rem}
  In \cite{mmnst2024}, there is a question: is $\mathcal{A}^{(2)}_\bullet$ a complete invariant for trees?
  From the proof of \Cref{thm:main}, we only use the property that $\mathcal{A}_t \subset \mathcal{A}_G$. 
  Therefore, we can also conclude that $\mathcal{A}^{(2)}_\bullet$ is a complete invariant for trees.
\end{rem}
\begin{rem}
  Since $X_{K_{\NN,2}}(\bullet)$ has much more information than $\mathcal{A}^{(2)}_\bullet$, $X_{K_{\NN,2}}(\bullet)$ may distinguish a broader class than trees, such as unicyclic graphs or bipartite graphs.

  {Question:} 
  Does there exist a finite integer $m$ such that $I_{K_{\NN}}(\mathcal{G})=m$, where $\mathcal{G}$ is the set of all simple and finite graphs? 
  Alternatively, for any natural number $k$, does there exist a pair of graphs $G$ and $G'$ such that $X_{K_{\NN,k}}(G)=X_{K_{\NN,k}}(G')$ holds but they are not isomorphic to each other?

\end{rem}

\section*{Acknowledgements}
The author thanks Professor Tsuyoshi Miezaki and Master’s student Naoki Fuji for their helpful discussions and comments.


\end{document}